\def\l{\ell}
\def\w{w}
\def\cop{cyclically ordered partition\xspace}
\def\cops{cyclically ordered partitions\xspace}
\def\u{\ensuremath{\mathcal U}\xspace}
\newtheorem{theorem}{Theorem}
\newtheorem{cor}[theorem]{Corollary}
\newtheorem{prop}[theorem]{Proposition}
\newtheorem{remark}[theorem]{Remark}
\newtheorem{lemma}[theorem]{Lemma}
\newcommand{\des}{{\rm des\,}}
\newcommand{\msn}{\mathfrak{S}_n}
\newcommand{\as}{{\rm as\,}}
\newcommand{\Eulerian}[2]{\genfrac{<}{>}{0pt}{}{#1}{#2}}
\newcommand{\Stirling}[2]{\genfrac{\{}{\}}{0pt}{}{#1}{#2}}
\title{Some combinatorial arrays related to the Lotka-Volterra system}
\author[S.-M.~Ma]{Shi-Mei~Ma}
\address{School of Mathematics and Statistics,
        Northeastern University at Qinhuangdao,
         Hebei 066004, P. R. China}
\email{shimeimapapers@gmail.com (S.-M. Ma)}
\author[T.~Mansour]{Toufik Mansour}
\address{Department of Mathematics, University of Haifa, 31905 Haifa, Israel}
\email{toufik@math.haifa.ac.il (T. Mansour)}
\author[D. Callan]{David Callan}
\address{Department of Statistics, University of Wisconsin, Madison, WI 53706}
\email{callan@stat.wisc.edu}
\begin{document}

\maketitle

\begin{abstract}
The purpose of this paper is to investigate the connection between the Lotka-Volterra system and combinatorics.
We study several context-free grammars associated with the Lotka-Volterra system.
Some combinatorial arrays, involving the Stirling numbers of the second kind and Eulerian numbers,
are generated by these context-free grammars. In particular,
we present grammatical characterization of some statistics on cyclically ordered partitions.
\bigskip\\
{\sl Keywords:} Lotka-Volterra system; Context-free grammars; Eulerian numbers; Cyclically ordered partitions
\end{abstract}
\section{Introduction}
One of the most commonly used models of two species predator-prey interaction is the classical
{\it Lotka-Volterra model}:
\begin{equation}\label{Lotka-Volterra-model}
\frac{dx}{dt}=x(a-by),
\frac{dy}{dt}=y(-c+dx),
\end{equation}
where $y(t)$ and $x(t)$ represent, respectively, the predator population and the prey population
as functions of time, and $a,b,c,d$ are positive constants. In general,
an $n$-th order {\it Lotka-Volterra system} takes the form
\begin{equation}\label{Lotka-Volterra-model-2}
\frac{dx_i}{dt}=\lambda_ix_i+x_i\sum_{j=1}^nM_{i,j}x_j, i=1,2,\ldots,n,
\end{equation}
where $\lambda_i,M_{i,j}$ are real real constants.
The differential system~\eqref{Lotka-Volterra-model-2} is ubiquitous and
arises often in mathematical
ecology, dynamical system theory and other branches of
mathematics (see~\cite{Allen07,Chauvet02,Evans99,MiMura78,Shih97,Singer92}).
In this paper, we study several context-free grammars associated with~\eqref{Lotka-Volterra-model-2}.

In his study~\cite{Chen93} of exponential structures in combinatorics,
Chen introduced the grammatical method systematically.
Let $A$ be an alphabet whose letters are regarded as independent commutative indeterminates.
Following Chen, a {\it context-free grammar} $G$ over $A$ is defined as a set
of substitution rules that replace a letter in $A$ by a formal function over $A$.
The formal derivative $D$ is a linear operator defined with respect to a context-free grammar $G$.

Let $[n]=\{1,2,\ldots,n\}$. {\it The Stirling number of the second kind}
$\Stirling{n}{k}$ is the number of ways to partition $[n]$ into $k$ blocks.
It is well known that $S(n,k)=S(n-1,k-1)+kS(n-1,k)$,
$S(0,0)=1$ and $S(n,0)=0$ for $n\geq 1$ (see~\cite[A008277]{Sloane}).
Let $\msn$ be the symmetric group of all permutations of $[n]$.
A {\it descent} of a permutation $\pi\in\msn$ is a position $i$ such that $\pi(i)>\pi(i+1)$.
Denote by $\des(\pi)$ the number of descents of $\pi$.
The {\it Eulerian number} $\Eulerian{n}{k}$ is the number of permutations in $\msn$ with $k-1$ descents, where $1\leq k\leq n$ (see~\cite[A008292]{Sloane}).
Let us now recall two classical results on grammars.
\begin{prop}[{\cite[Eq. 4.8]{Chen93}}]
If $G=\{x\rightarrow xy, y\rightarrow y\}$,
then
\begin{equation*}
D^n(x)=x\sum_{k=1}^n\Stirling{n}{k}y^k\quad\textrm{for $n\ge 1$}.
\end{equation*}
\end{prop}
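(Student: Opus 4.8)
The plan is to proceed by induction on $n$, using the defining property that the formal derivative $D$ is a linear operator obeying the Leibniz (product) rule, together with the substitution rules $D(x)=xy$ and $D(y)=y$. The base case $n=1$ is immediate: $D(x)=xy=x\,\Stirling{1}{1}y$, since $\Stirling{1}{1}=1$.

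For the inductive step, I would first record how $D$ acts on a single monomial $xy^k$. Because $D$ is a derivation and $D(y^k)=ky^{k-1}D(y)=ky^k$, I obtain $D(xy^k)=D(x)\,y^k+x\,D(y^k)=xy^{k+1}+kxy^k$. Applying $D$ to the inductive hypothesis $D^n(x)=x\sum_{k=1}^n\Stirling{n}{k}y^k$ and distributing term by term then yields two sums: one in which each exponent is raised by one, and one in which each term is weighted by its index $k$.

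The key step is to recombine these two sums into a single sum and to recognize the resulting coefficients. After reindexing the first sum (replacing $k$ by $k-1$), the coefficient of $y^k$ becomes $\Stirling{n}{k-1}+k\Stirling{n}{k}$. The classical recurrence $\Stirling{n+1}{k}=\Stirling{n}{k-1}+k\Stirling{n}{k}$ recalled in the introduction is exactly what is needed to collapse this into $\Stirling{n+1}{k}$, giving $D^{n+1}(x)=x\sum_{k=1}^{n+1}\Stirling{n+1}{k}y^k$ and closing the induction.

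The computation is routine; the only point requiring care is the bookkeeping at the endpoints of the two sums (the terms $k=1$ and $k=n+1$), where one must check that the boundary conventions $\Stirling{n}{0}=0$ and $\Stirling{n}{n+1}=0$ let the recurrence apply uniformly across the full range $1\le k\le n+1$. I do not expect any genuine obstacle beyond this indexing check.
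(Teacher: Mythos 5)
Your proof is correct. The paper itself offers no proof of this proposition---it is quoted directly from Chen's paper with a citation---and your induction, using the Leibniz rule to get $D(xy^k)=xy^{k+1}+kxy^k$ and then collapsing the two sums via the recurrence $\Stirling{n+1}{k}=\Stirling{n}{k-1}+k\Stirling{n}{k}$ (with the boundary conventions $\Stirling{n}{0}=0$ and $\Stirling{n}{n+1}=0$), is exactly the standard argument one would expect, so there is nothing to reconcile.
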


\begin{prop}[{\cite[Section 2.1]{Dumont96}}]
If $G=\{x\rightarrow xy, y\rightarrow xy\}$,
then
\begin{equation*}
D^n(x)=\sum_{k=1}^{n}\Eulerian{n}{k}x^{k}y^{n-k+1}\quad\textrm{for $n\ge 1$}.
\end{equation*}
\end{prop}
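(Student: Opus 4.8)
The plan is to argue by induction on $n$, using that $D$ is a derivation and matching the recurrence that emerges against the defining recurrence of the Eulerian numbers. The base case $n=1$ is immediate, since $D(x)=xy$ agrees with $\Eulerian{1}{1}xy$. For the inductive step I would first record how $D$ acts on a single monomial. Because $D$ obeys the Leibniz rule and $D(x)=xy$, $D(y)=xy$, one gets $D(x^k)=kx^ky$ and $D(y^m)=mxy^m$, so that
\begin{equation*}
D(x^ky^m)=kx^ky^{m+1}+mx^{k+1}y^m.
\end{equation*}

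Next, assuming $D^{n-1}(x)=\sum_{k=1}^{n-1}\Eulerian{n-1}{k}x^ky^{n-k}$, I would apply $D$ termwise, taking $m=n-k$ above, to obtain
\begin{equation*}
D^n(x)=\sum_{k=1}^{n-1}\Eulerian{n-1}{k}\bigl(kx^ky^{n-k+1}+(n-k)x^{k+1}y^{n-k}\bigr).
\end{equation*}
The key step is then to read off the coefficient of the monomial $x^ky^{n-k+1}$. The first family of terms contributes $k\Eulerian{n-1}{k}$. The second family, after shifting the summation index so that $x^{k+1}y^{n-k}$ is rewritten in the form $x^ky^{n-k+1}$, contributes $(n-k+1)\Eulerian{n-1}{k-1}$. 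Hence the total coefficient of $x^ky^{n-k+1}$ is $k\Eulerian{n-1}{k}+(n-k+1)\Eulerian{n-1}{k-1}$.

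Finally I would invoke the Eulerian recurrence, which in this paper's indexing (where $\Eulerian{n}{k}$ counts the permutations with exactly $k-1$ descents) reads
\begin{equation*}
\Eulerian{n}{k}=k\Eulerian{n-1}{k}+(n-k+1)\Eulerian{n-1}{k-1},
\end{equation*}
to conclude that this coefficient is precisely $\Eulerian{n}{k}$, completing the induction. I expect the only genuinely delicate point to be the bookkeeping rather than any conceptual difficulty: one must keep the shifted indexing convention straight and handle the boundary monomials $x^1y^n$ and $x^ny^1$ (where exactly one of the two contributions vanishes) so that the reindexed second sum aligns cleanly with the first, with the usual convention that $\Eulerian{n-1}{k}=0$ outside the range $1\le k\le n-1$.
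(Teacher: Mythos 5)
Your proof is correct. The monomial action $D(x^ky^m)=kx^ky^{m+1}+mx^{k+1}y^m$ is right, the reindexing of the second sum ($k'\mapsto k-1$, contributing $(n-k+1)\Eulerian{n-1}{k-1}$) is right, and the recurrence you invoke,
\begin{equation*}
\Eulerian{n}{k}=k\Eulerian{n-1}{k}+(n-k+1)\Eulerian{n-1}{k-1},
\end{equation*}
is indeed the standard Eulerian recurrence translated into this paper's shifted convention ($\Eulerian{n}{k}$ counting permutations with $k-1$ descents), and your treatment of the boundary monomials via the vanishing convention is the correct way to close the induction. One point of comparison: the paper itself offers no proof of this proposition at all --- it is quoted verbatim from Dumont's 1996 paper, where the result is established in the context of grammars acting on trees and increasing structures. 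So your induction is not merely a restatement of the paper's argument; it is a self-contained elementary verification that the paper delegates to a reference. That is a reasonable trade-off: your route is shorter and purely computational, while Dumont's grammatical/combinatorial setting explains \emph{why} Eulerian numbers appear (the grammar tracks descent creation when inserting a new largest element into a permutation), which the bare induction does not.
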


This paper is a continuation of~\cite{Chen93,Dumont96}.
Throughout this paper, arrays are indexed by $n,i$ and $j$.
Call $(a_{n,i,j})$ a combinatorial array if the numbers $a_{n,i,j}$ are nonnegative integers.
For any function $H(x,p,q)$, we denote by $H_y$ the partial derivative of $H$ with respect to $y$, where $y\in\{x,p,q\}$.
In the next section, we present grammatical characterization of some statistics on cyclically ordered partitions.
\section{Relationship to cyclically ordered partitions}\label{sec:02}
Recall that a partition $\pi$ of $[n]$, written $\pi\vdash [n]$, is a collection of disjoint and nonempty subsets
$B_1,B_2,\ldots,B_k$ of $[n]$ such that $\bigcup_{i=1}^kB_i=[n]$, where each $B_i$ ($1\leq i\leq k$) is
called a block of $\pi$. A {\it cyclically ordered partition} of $[n]$ is a partition of $[n]$ whose blocks are endowed with a
cyclic order. We always use a canonical representation for cyclically ordered partitions: a list of blocks in which the first block contains the element 1 and each block is an increasing list. For example, $(123),~(12)(3),~(13)(2),~(1)(23),~(1)(2)(3),~(1)(3)(2)$ are all cyclically ordered partitions of $[3]$. The {\it opener} of a block is its least element. For example, the list of openers of $(13)(2)$ and $(1)(3)(2)$ are respectively  given by $12$ and $132$. In the following, we shall study some statistics on the list of openers.
\subsection{Descent statistic}
\hspace*{\parindent}

Consider the grammar
\begin{equation}\label{Grammar:01}
G=\{x\rightarrow x+xy, y\rightarrow y+xy\}.
\end{equation}

From~\eqref{Grammar:01}, we have
\begin{equation*}
\begin{split}
D(x)&=x+xy,\\
D^2(x)&=x+3xy+xy^2+x^2y,\\
D^3(x)&=x+7xy+6xy^2+xy^3+6x^2y+4x^2y^2+x^3y.
\end{split}
\end{equation*}

For $n\geq 0$, we define
$$D^n(x)=\sum_{i\geq1,j\geq 0}a_{n,i,j}x^iy^j.$$
Since
\begin{align*}
D^{n+1}(x)&=D\left(\sum_{i,j}a_{n,i,j}x^iy^j\right)\\
&=\sum_{i,j}(i+j)a_{n,i,j}x^{i}y^{j}+\sum_{i,j}ia_{n,i,j}x^{i}y^{j+1}+\sum_{i,j}ja_{n,i,j}x^{i+1}y^{j},
\end{align*}
we get
\begin{equation}\label{eq:anij}
a_{n+1,i,j}=(i+j)a_{n,i,j}+ia_{n,i,j-1}+ja_{n,i-1,j}
\end{equation}
for $i,j\geq 1$, with the initial conditions $a_{0,i,j}$ to be $1$ if $(i,j)=(1,0)$, and to be $0$ otherwise.
Clearly, $a_{n,1,0}=1$ and $a_{n,i,0}=0$ for $i\geq 2$.

Define
$$A=A(x,p,q)=\sum_{n,i,j\geq0}a_{n,i,j}\frac{x^{n}}{n!}p^iq^j.$$

We now present the first main result of this paper.
\begin{theorem}\label{mainthm:01}
The generating function $A$ is given by
$$A=\frac{p(p-q)e^x}{p-qe^{(p-q)(e^x-1)}}.$$
Moreover, for all $n,i,j\geq1$,
\begin{equation}\label{Explicit:anij}
a_{n,i,j}=\Stirling{n+1}{i+j}\Eulerian{i+j-1}{i}.
\end{equation}
\end{theorem}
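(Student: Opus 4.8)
The plan is to treat the two assertions separately: first turn the recurrence \eqref{eq:anij} into a first-order linear partial differential equation for $A$ and solve it by characteristics, then verify the closed form \eqref{Explicit:anij} directly against \eqref{eq:anij} by induction.

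For the generating function I would begin by translating \eqref{eq:anij} into a PDE. Writing $F_n(p,q)=\sum_{i,j}a_{n,i,j}p^iq^j$, so that $A=\sum_{n\ge0}F_n\,x^n/n!$ and $A_x=\sum_{n\ge0}F_{n+1}\,x^n/n!$, the three terms on the right of \eqref{eq:anij} correspond respectively to the operators $p\partial_p+q\partial_q$, $qp\partial_p$ and $pq\partial_q$ acting on $F_n$ (the factors $i,j$ are produced by $p\partial_p,q\partial_q$ and the index shifts by multiplication by $q$ and by $p$). Summing against $x^n/n!$ gives
\[
A_x=p(1+q)A_p+q(1+p)A_q ,
\]
with initial datum $A(0,p,q)=p$ coming from $a_{0,1,0}=1$. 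I would solve this by the method of characteristics: along $\dot p=-p(1+q)$, $\dot q=-q(1+p)$ one checks $\tfrac{d}{dx}(p-q)=-(p-q)$ and $\tfrac{d}{dx}\big(\log(p/q)+(p-q)\big)=0$, so the two independent invariants are $c_1=(p-q)e^x$ and $c_2=\tfrac pq e^{p-q}$. Matching the datum at $x=0$, where $c_1=p-q$ and $c_2=\tfrac pq e^{c_1}$, expresses $p$ as $\rho c_1/(\rho-1)$ with $\rho=c_2e^{-c_1}$; substituting the invariants back in yields exactly $A=\dfrac{p(p-q)e^x}{p-qe^{(p-q)(e^x-1)}}$. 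Equivalently, since the PDE together with its initial condition determines each $F_{n+1}$ from $F_n$ and hence has a unique formal solution, one may simply differentiate the stated expression and confirm it satisfies both.

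For the explicit formula I would argue by induction on $n$, checking that $b_{n,i,j}:=\Stirling{n+1}{i+j}\Eulerian{i+j-1}{i}$ obeys \eqref{eq:anij}. Setting $m=i+j$ and noting $b_{n,i,j-1}=\Stirling{n+1}{m-1}\Eulerian{m-2}{i}$ and $b_{n,i-1,j}=\Stirling{n+1}{m-1}\Eulerian{m-2}{i-1}$, the right-hand side of \eqref{eq:anij} becomes
\[
m\Stirling{n+1}{m}\Eulerian{m-1}{i}+\Stirling{n+1}{m-1}\Big(i\Eulerian{m-2}{i}+(m-i)\Eulerian{m-2}{i-1}\Big).
\]
Since $j=m-i$, the parenthesised sum collapses to $\Eulerian{m-1}{i}$ by the Eulerian recurrence $\Eulerian{m-1}{i}=i\Eulerian{m-2}{i}+(m-i)\Eulerian{m-2}{i-1}$, and then the Stirling recurrence $\Stirling{n+2}{m}=m\Stirling{n+1}{m}+\Stirling{n+1}{m-1}$ turns the whole expression into $\Stirling{n+2}{m}\Eulerian{m-1}{i}=b_{n+1,i,j}$, as required.

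The differentiation in the first part and the two-recurrence collapse in the second are mechanical; the one place demanding genuine care is the treatment of the boundary cells, which I expect to be the main obstacle. The recurrence \eqref{eq:anij} is only asserted for $i,j\ge1$, yet when $i=1$ or $j=1$ it calls on the values $a_{n,0,j}$ and $a_{n,i,0}$, which lie outside the range where \eqref{Explicit:anij} is claimed. I would therefore confirm that $b_{n,i,j}$ reproduces these boundary values under the standard conventions (so the inductive step is fed correct data): namely that $b_{n,1,0}$ matches $a_{n,1,0}=1$ and $b_{n,i,0}=0=a_{n,i,0}$ for $i\ge2$, and that the $i=1$ instance of the term $j\,a_{n,i-1,j}$ vanishes. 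Once these edge conventions are pinned down and the base case (say $n=1$) is checked against $D(x)=x+xy$, the induction closes.
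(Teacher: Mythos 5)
Your proposal is correct, and the second half takes a genuinely different route from the paper. For the generating function, you and the paper do essentially the same thing: translate \eqref{eq:anij} into the PDE $A_x=p(1+q)A_p+q(1+p)A_q$ and invoke uniqueness of the formal solution given $A(0,p,q)=p$; the paper merely verifies the closed form against the PDE ("routine to check"), whereas your characteristics computation (the invariants $(p-q)e^x$ and $(p/q)e^{p-q}$ are indeed constant along $\dot p=-p(1+q)$, $\dot q=-q(1+p)$) actually \emph{derives} it, which is a modest improvement. For the explicit formula \eqref{Explicit:anij}, the paper proceeds by generating functions: it sums the claimed products against $\sum_{n}\Stirling{n+1}{k+1}\frac{x^{n+1}}{(n+1)!}=\frac{(e^x-1)^{k+1}}{(k+1)!}$ and a generating-function identity for Eulerian polynomials, and shows the result reproduces $A$ after a substitution of variables. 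You instead verify the product formula directly by induction on $n$, using only the two classical recurrences $\Eulerian{m-1}{i}=i\Eulerian{m-2}{i}+(m-i)\Eulerian{m-2}{i-1}$ (correct for the paper's indexing, where $\Eulerian{n}{k}$ counts $k-1$ descents) and $\Stirling{n+2}{m}=m\Stirling{n+1}{m}+\Stirling{n+1}{m-1}$; your collapse of the right-hand side of \eqref{eq:anij} to $\Stirling{n+2}{m}\Eulerian{m-1}{i}$ is exactly right. Your approach is more elementary and self-contained (notably, it sidesteps the Eulerian generating-function identity, which as printed in the paper contains a typo), at the cost of the boundary bookkeeping you flag: one needs $a_{n,0,j}=0$ (forced since $i\geq1$ in $D^n(x)$), $a_{n,1,0}=1$, and $a_{n,i,0}=0$ for $i\geq2$, and then the cases $i=1$ or $j=1$ close because $\Eulerian{m-1}{1}=\Eulerian{m-1}{m-1}=1$; all of these do check out, so the outline you give completes without obstruction. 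What the paper's method buys in exchange is that it re-derives the closed form of $A$ from the product formula, making the two halves of the theorem visibly consistent with one another.
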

\begin{proof}
By rewriting~\eqref{eq:anij} in terms of generating function $A$, we have
\begin{equation}\label{eq:Axpq}
A_x=p(1+q)A_p+q(1+p)A_q.
\end{equation}
It is routine to check that the generating function
$$\widetilde{A}(x,p,q)=\frac{p(p-q)e^x}{p-qe^{(p-q)(e^x-1)}}$$
satisfies (\ref{eq:Axpq}). Also, this generating function gives $\widetilde{A}(0,p,q)=p$, $\widetilde{A}(x,p,0)=pe^x$ and $\widetilde{A}(x,0,q)=0$ with $q\neq0$. Hence, $A=\widetilde{A}$.
Now let us prove that $a_{n,i,j}=\Stirling{n+1}{i+j}\Eulerian{i+j-1}{i}$. Note that
\begin{align*}
\frac{d}{dx}\sum_{n,i,k\geq0}a_{n,i,k+1-i}\frac{x^{n+1}}{(n+1)!}v^{i+1}w^k
&=v\frac{d}{dx}\sum_{k\geq0}\left(\sum_{n\geq k+1}\Stirling{n+1}{k+1}\frac{x^{n+1}}{(n+1)!}\sum_{i=0}^k\Eulerian{k}{i}v^i\right)w^k\\
&=v\frac{d}{dx}\sum_{k\geq0}\left(\sum_{i=0}^k\Eulerian{k}{i}v^i\right)\frac{(e^x-1)^{k+1}}{(k+1)!}w^k.
\end{align*}
By using the fact that
$$\sum_{k\geq0}\left(\sum_{i=0}^k\Eulerian{k}{i}p^i\right)u^k=\int_0^u\frac{p-1}{p-e^{u'(p-1)}}du'=
\frac{1}{p}(u(p-1)-\ln(e^{u(p-1)}-p)+\ln(1-p)),$$
we obtain that
\begin{align*}
v\frac{d}{dx}\sum_{n,i,k\geq0}a_{n,i,k+1-i}\frac{x^{n+1}}{(n+1)!}v^iw^k
&=\frac{wv(v-1)e^x}{v-e^{(e^x-1)w(v-1)}},
\end{align*}
which implies
\begin{align*}
A(x,vw,w)=\frac{wv(v-1)e^x}{v-e^{(e^x-1)w(v-1)}},
\end{align*}
as required.
\end{proof}

Define $$a_n=\sum_{i\geq1,j\geq 0}a_{n,i,j}.$$
Clearly,
$a_n=\sum_{k= 0}^nk!\Stirling{n+1}{k+1}$.

\begin{prop}
$\Stirling{n}{k} \Eulerian{k-1}{i}$ is the number of cyclically ordered partitions of $[n]$ with $k$ blocks whose list of openers contains $i-1$ descents.
\end{prop}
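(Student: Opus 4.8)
The plan is to count cyclically ordered partitions of $[n]$ with $k$ blocks directly, organizing the count first by the underlying set partition and then by the descent statistic on the list of openers. The strategy is to show that these two pieces of data contribute independently, so that the total factors as a product.

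First I would separate a cyclically ordered partition into two pieces: (i) the underlying unordered set partition of $[n]$ into $k$ blocks, and (ii) the cyclic ordering imposed on those blocks. By definition there are $\Stirling{n}{k}$ choices for the set partition. For the cyclic ordering, recall that the canonical representation always lists the block containing $1$ first; hence specifying the cyclic order amounts to specifying a linear order on the remaining $k-1$ blocks, giving $(k-1)!$ possibilities. The goal is then to show that exactly $\Eulerian{k-1}{i}$ of these orderings produce a list of openers with $i-1$ descents, and, crucially, that this count is the same for every underlying set partition.

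The key observation is that the opener of the first block is always $1$, the global minimum of $[n]$, so position $1$ of the openers list can never be a descent. Writing the list of openers as $1=o_1,o_2,\ldots,o_k$, every descent must occur among $o_2,\ldots,o_k$, which is an arrangement of the $k-1$ openers of the blocks not containing $1$. Since the number of descents of a sequence of distinct integers depends only on their relative order, I would standardize $o_2\cdots o_k$ to a permutation in $\mathfrak{S}_{k-1}$. As the cyclic ordering ranges over all $(k-1)!$ possibilities, this standardization ranges over all of $\mathfrak{S}_{k-1}$, each permutation arising exactly once, and the descent count is preserved under standardization. Consequently, for any fixed set partition, the number of orderings whose openers list has $i-1$ descents equals the number of permutations in $\mathfrak{S}_{k-1}$ with $i-1$ descents, namely $\Eulerian{k-1}{i}$.

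Combining the two independent choices yields $\Stirling{n}{k}\Eulerian{k-1}{i}$, as claimed. I expect the delicate points to be bookkeeping rather than conceptual: confirming that standardization is a descent-preserving bijection onto $\mathfrak{S}_{k-1}$, and pinning down the indexing so that ``$i-1$ descents in the openers list'' matches $\Eulerian{k-1}{i}$, the number of permutations in $\mathfrak{S}_{k-1}$ with $i-1$ descents. A quick check against the cyclically ordered partitions of $[3]$ listed earlier in this section (for instance, $(1)(3)(2)$ has openers $132$ with one descent, matching $\Stirling{3}{3}\Eulerian{2}{2}=1$) confirms that the conventions align.
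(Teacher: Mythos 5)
Your proposal is correct and follows essentially the same route as the paper: choose the underlying partition in $\Stirling{n}{k}$ ways, keep the block containing $1$ first, and arrange the remaining $k-1$ blocks so their openers have the required descents, counted by $\Eulerian{k-1}{i}$. The paper states this count without justification, whereas you supply the (correct) supporting details — the first position can never be a descent, and standardizing the last $k-1$ openers gives a descent-preserving bijection onto $\mathfrak{S}_{k-1}$ — but the decomposition is the same.
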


Proof. To form such a cyclically ordered partition, start with a partition of $[n]$ into $k$ blocks in canonical form, each block increasing and blocks arranged in order of increasing first entries (there are $\Stirling{n}{k}$ choices). The first opener is thus 1. Then leave the first block in place and rearrange the $k-1$ remaining blocks so that their openers, viewed as a list, contain $i-1$ descents (there are $\Eulerian{k-1}{i}$ choices).

We can now conclude the following corollary from the discussion above.
\begin{cor}\label{openers-descents}
For all $n,i,j\geq1$, $a_{n,i,j}$ is the number of cyclically ordered partitions of $[n+1]$ with $i+j$ blocks whose list of openers contains $i-1$ descents.
\end{cor}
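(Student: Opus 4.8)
The plan is to obtain the corollary as an immediate consequence of the explicit formula established in Theorem~\ref{mainthm:01} together with the combinatorial interpretation supplied by the preceding Proposition. First I would recall that, by Theorem~\ref{mainthm:01}, for all $n,i,j\geq1$ one has
$$a_{n,i,j}=\Stirling{n+1}{i+j}\Eulerian{i+j-1}{i}.$$
Thus it suffices to exhibit a family of cyclically ordered partitions whose cardinality is given by this right-hand side and whose description matches the one in the corollary.

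Next I would invoke the Proposition with its ground set taken to be $[n+1]$ and its number of blocks taken to be $k=i+j$. Under these choices the product $\Stirling{n}{k}\Eulerian{k-1}{i}$ appearing in the Proposition specializes exactly to $\Stirling{n+1}{i+j}\Eulerian{i+j-1}{i}$, since $k-1=i+j-1$. Hence the Proposition directly asserts that this product counts the cyclically ordered partitions of $[n+1]$ with $i+j$ blocks whose list of openers contains $i-1$ descents. Comparing this count with the displayed formula for $a_{n,i,j}$ then completes the argument.

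The lone point meriting a remark, rather than a genuine obstacle, is the reconciliation of the two notions of ``descents of the openers.'' In the Proposition the $i-1$ descents are counted by rearranging the $k-1$ openers of the blocks other than the one containing $1$, whereas the corollary refers to descents of the full list of openers. These agree because the first opener is always $1$: being the minimum element, it can never initiate a descent, so the junction between the first opener and the second never contributes. Consequently the descent count of the full opener list equals that of the trailing $k-1$ openers, and the identification is exact. There is no further difficulty here; the result is a bookkeeping corollary of the enumerative and algebraic facts already in hand.
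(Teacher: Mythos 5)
Your proposal is correct and matches the paper's own argument: the paper likewise obtains Corollary~\ref{openers-descents} by combining the explicit formula $a_{n,i,j}=\Stirling{n+1}{i+j}\Eulerian{i+j-1}{i}$ from Theorem~\ref{mainthm:01} with the Proposition applied to the ground set $[n+1]$ and $k=i+j$ blocks. Your added remark that the full opener list and the trailing $k-1$ openers have the same descent count (because the first opener is $1$) is exactly the implicit bookkeeping in the paper's construction, so there is nothing to correct.
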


\subsection{Peak statistics}
\hspace*{\parindent}

The idea of a peak (resp. valley) in a list of integers $(\w_i)_{i=1}^n$ is an entry that is greater (resp. smaller) than its neighbors. The number of peaks in a permutation is an important combinatorial statistic.
See, e.g.,~\cite{Aguiar04,Dilks09,Francon79,Ma121} and the references therein.
However, the question of whether the first and/or last entry may qualify as a peak (or valley) gives rise to several different definitions. In this paper, we consider only left peaks and right valleys. A {\it left peak index} is an index $i\in[n-1]$ such that $\w_{i-1}<\w_{i}>\w_{i+1}$, where we take $\w_0=0$, and the entry $\w_i$ is a {\it left peak}. Similarly, a {\it right valley}
is an entry $\w_i$ with $i\in[2,n]$ such that $\w_{i-1}>\w_{i}<\w_{i+1}$, where we take $\w_{n+1}=\infty$.
Thus the last entry may be a right valley but not a left peak.  For example, the list $64713258$ has 3 left peaks and 3 right valleys. Clearly, left peaks and right valleys in a list are equinumerous: they alternate with a peak first and a valley last.
Peaks and valleys were considered in~\cite{Francon79}.
The left peak statistic first appeared in~\cite[Definition 3.1]{Aguiar04}.

Let $P(n,k)$ be the number of permutations in $\msn$ with $k$ left peaks.
Let $P_n(x)=\sum_{i\geq 0}P(n,k)x^k$.
It is well known~\cite[A008971]{Sloane} that
\begin{eqnarray*}
  P(x,z) &=&1+ \sum_{n\geq 1}P_n(x)\frac{z^n}{n!}\\
   &=& \frac{\sqrt{1-x}}{\sqrt{1-x}\cosh(z\sqrt{1-x})-\sinh(z\sqrt{1-x})}
\end{eqnarray*}

Let $D$ be the differential operator $\frac{d}{d\theta}$. Set $x=\sec \theta$ and $y=\tan \theta$.
Then
\begin{equation}\label{tansec}
D(x)=xy,D(y)=x^2.
\end{equation}
There is a large literature devoted to the repeated differentiation of
the secant and tangent functions (see \cite{Franssens07,Hoffman99,Ma121} for instance).
As a variation of~\eqref{Grammar:01}, it is natural to
consider the grammar
\begin{equation}\label{Grammar:02}
G=\{x\rightarrow x+xy, y\rightarrow y+x^2\}.
\end{equation}

From~\eqref{Grammar:02}, we have
\begin{equation*}
\begin{split}
D(x)&=x+xy,\\
D^2(x)&=x+3xy+xy^2+x^3,\\
D^3(x)&=x+7xy+6xy^2+xy^3+6x^3+5x^3y.
\end{split}
\end{equation*}

Define $$D^n(x)=\sum_{i\geq1,j\geq 0}b_{n,i,j}x^iy^j.$$

Since
\begin{align*}
D^{n+1}(x)&=D\left(\sum_{i\geq1,j\geq 0}b_{n,i,j}x^iy^j\right)\\
&=\sum_{i,j}(i+j)b_{n,i,j}x^{i}y^{j}+\sum_{i,j}ib_{n,i,j}x^{i}y^{j+1}+
\sum_{i,j}jb_{n,i,j}x^{i+2}y^{j-1},
\end{align*}
we get
\begin{equation}\label{eq:bnij}
b_{n+1,i,j}=(i+j)b_{n,i,j}+ib_{n,i,j-1}+(j+1)b_{n,i-2,j+1}
\end{equation}
for $i\geq 1$ and $j\geq 0$, with the initial conditions $b_{0,i,j}$ to be $1$ if $(i,j)=(1,0)$, and to be $0$ otherwise.
Clearly, $b_{n,1,0}=1$ for $n\geq 1$.

Define
$$B=B(x,p,q)=\sum_{n,i,j\geq0}b_{n,i,j}p^iq^j\frac{x^n}{n!}.$$

We now present the second main result of this paper.
\begin{theorem}\label{mainthm:02}
The generating function $B$ is given by
$$B(x,p,q)=\frac{p\sqrt{q^2-p^2}e^x}{\sqrt{q^2-p^2}\cosh(\sqrt{q^2-p^2}(e^x-1))-q\sinh(\sqrt{q^2-p^2}(e^x-1))}.$$
Moreover, for all $n,i,j\geq1$,
\begin{equation}\label{Explicit:bnij}
b_{n,2i-1,j}=\Stirling{n+1}{2i-1+j}P(2i-2+j,i-1).
\end{equation}
\end{theorem}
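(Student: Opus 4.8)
The plan is to follow the proof of Theorem~\ref{mainthm:01} verbatim in structure. First I would convert the recurrence~\eqref{eq:bnij} into a first-order linear PDE for $B$. Multiplying~\eqref{eq:bnij} by $p^iq^j x^n/n!$ and summing over $n,i,j$, the diagonal term $(i+j)b_{n,i,j}$ gives $pB_p+qB_q$, the term $ib_{n,i,j-1}$ gives $pq\,B_p$ after the shift $j\mapsto j+1$, and the term $(j+1)b_{n,i-2,j+1}$, after the shift $(i,j)\mapsto(i-2,j+1)$, contributes $p^2q^{-1}\cdot qB_q=p^2B_q$. Thus I expect
\begin{equation*}
B_x=p(1+q)B_p+(p^2+q)B_q,
\end{equation*}
which is exactly~\eqref{eq:Axpq} with the single replacement $q(1+p)\mapsto p^2+q$, reflecting that the rule $y\to y+xy$ of~\eqref{Grammar:01} has become $y\to y+x^2$ in~\eqref{Grammar:02}.

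Next I would verify that the stated closed form $\widetilde B(x,p,q)$ solves this equation. Writing $R=\sqrt{q^2-p^2}$ and $u=e^x-1$, so that $R_p=-p/R$ and $R_q=q/R$, this is a routine if lengthy computation. Together with the initial value $\widetilde B(0,p,q)=p$ (from $\cosh 0=1$, $\sinh 0=0$) it forces $B=\widetilde B$: the PDE is precisely the coefficientwise form of~\eqref{eq:bnij} together with $B(0,p,q)=p$, so the formal power series solution in $x$ is unique. As a sanity check I would confirm the slice $\widetilde B(x,p,0)=pe^x\sec\!\bigl(p(e^x-1)\bigr)$, whose $p^1$ and $p^3$ coefficients reproduce $b_{n,1,0}=1$ and $b_{n,3,0}=(3^n-2^{n+1}+1)/2$.

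For the explicit formula~\eqref{Explicit:bnij} I would reuse the substitution $p=vw$, $q=w$, under which $p^{2i-1}q^j=v^{2i-1}w^{k}$ with $k=2i-1+j$. Feeding the conjectured value $b_{n,2i-1,j}=\Stirling{n+1}{k}P(k-1,i-1)$ into $B(x,vw,w)$, summing first over $n$ by means of $\sum_{n}\Stirling{n+1}{k}x^n/n!=e^x(e^x-1)^{k-1}/(k-1)!$, and then reindexing by $m=k-1$ and $\ell=i-1$ (the constraint $k\geq 2i-1$ being automatic, since $P(m,\ell)=0$ whenever $2\ell>m$), the double sum should collapse to
\begin{equation*}
B(x,vw,w)=vw\,e^x\sum_{m\geq0}P_m(v^2)\,\frac{\bigl(w(e^x-1)\bigr)^m}{m!}=vw\,e^x\,P\!\left(v^2,\,w(e^x-1)\right),
\end{equation*}
where $P(x,z)$ is the left-peak generating function recalled at the beginning of this subsection. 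Inserting its closed form with $1-v^2$ in place of $1-x$ and using $q^2-p^2=w^2(1-v^2)$, i.e. $\sqrt{q^2-p^2}=w\sqrt{1-v^2}$, I expect this to coincide with $\widetilde B(x,vw,w)$ after cancelling a factor of $w$ from numerator and denominator. Since $B=\widetilde B$ and $(v,w)\mapsto(vw,w)$ is invertible, comparing coefficients of $v^{2i-1}w^{k}x^n/n!$ then yields~\eqref{Explicit:bnij}.

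The main obstacle is the direct verification that $\widetilde B$ satisfies the PDE. In contrast with Theorem~\ref{mainthm:01}, the radical $\sqrt{q^2-p^2}$ depends on both variables and occurs simultaneously in the numerator, inside the hyperbolic functions, and as the coefficient of $\sinh$, so the differentiations and cancellations are appreciably heavier; one must also fix a single consistent branch of the square root, in particular when specializing to $q=0$, where it becomes imaginary and $\cosh(iz)=\cos z$ is used.
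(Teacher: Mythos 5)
Your proposal is correct and follows essentially the same route as the paper: convert~\eqref{eq:bnij} into the PDE $B_x=p(1+q)B_p+(p^2+q)B_q$, verify the closed form together with the initial value $B(0,p,q)=p$ and invoke uniqueness of the formal power series solution, then confirm~\eqref{Explicit:bnij} by summing the conjectured coefficients against the Stirling/left-peak generating functions and comparing with the closed form. The only (immaterial) differences are cosmetic: you use the substitution $(p,q)\mapsto(vw,w)$ carried over from the proof of Theorem~\ref{mainthm:01}, while the paper tracks the half-index and total degree directly, and you check the slice $q=0$ rather than $p=0$.
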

\begin{proof}
The recurrence~\eqref{eq:bnij} can be written as
\begin{equation}\label{eq:Bxpq}
B_x=p(1+q)B_p+(p^2+q)B_q.
\end{equation}
It is routine to check that the generating function
$$\widetilde{B}=\widetilde{B}(x,p,q)=\frac{p\sqrt{q^2-p^2}e^x}{\sqrt{q^2-p^2}\cosh(\sqrt{q^2-p^2}(e^x-1))-q\sinh(\sqrt{q^2-p^2}(e^x-1))}$$
satisfies~\eqref{eq:Bxpq}). Also, this generating function gives $\widetilde{B}(0,p,q)=p$ and $\widetilde{B}(x,0,q)=0$. Hence, $B=\widetilde{B}$.

It follows from~\eqref{eq:bnij} that $b_{n,2i,j}=0$ for all $(i,j)\neq (0,0)$. Now let us prove that $$b_{n,2i-1,j}=\Stirling{n+1}{2i-1+j}P(2i-2+j,i-1).$$

Note that
\begin{align*}
\sum_{n,i,j\geq0}b_{n,i,j+1-2i}p^iq^j\frac{x^n}{n!}&=\sum_{n\geq0,i,j\geq1}b_{n,2i-1,j+1-2i}p^iq^j\frac{x^n}{n!}=p\sum_{n\geq0,j\geq1}\Stirling{n+1}{j}P_{j-1}(p)q^j\frac{x^n}{n!}\\
&=pe^x\sum_{j\geq1}\frac{(e^x-1)^{j-1}}{(j-1)!}P_{j-1}(p)q^j=pqe^xP(p,q(e^x-1)),
\end{align*}
Hence,
\begin{equation*}
\sum_{n,i,j\geq0}b_{n,i,j}p^iq^j\frac{x^n}{n!}=pe^xP(p^2/q^2,q(e^x-1))=B(x,p,q),
\end{equation*}
as required.
\end{proof}

Let $b_n=\sum_{i\geq1,j\geq 0}b_{n,i,j}$.
It follows from~\eqref{Explicit:cnij} that
$b_n=a_n$. In the following discussion,
we shall present a combinatorial interpretation for $b_{n,i,j}$.

\begin{lemma} \label{insert}
Suppose that $(\w_i)_{i=1}^k$ is a list of distinct integers containing $\ell$ right valleys and that $\w_1=1$. Then, among the $k$ ways to insert a new entry $m>\max(\w_i)$ into the list in a noninitial position, $2\ell+1$ of them will not change the number of right valleys and $k-(2\ell+1)$ will increase it by 1.
\end{lemma}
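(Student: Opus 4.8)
The plan is to track how the number of right valleys changes under each of the $k$ admissible insertions by encoding the list through its up–down pattern. First I would record, for the list $(\w_i)_{i=1}^k$ extended by the convention $\w_{k+1}=\infty$, the word $t_1t_2\cdots t_k\in\{A,D\}^k$ in which $t_i=A$ if $\w_i<\w_{i+1}$ and $t_i=D$ if $\w_i>\w_{i+1}$. Note that $t_k=A$ always (because of the trailing $\infty$) and $t_1=A$ because $\w_1=1$ is the least entry. In this language $\w_i$ is a right valley exactly when $(t_{i-1},t_i)=(D,A)$, so $\ell$ equals the number of occurrences of the factor $DA$ in $t_1\cdots t_k$. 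Since this word both begins and ends with $A$, its maximal constant runs alternate $A^{+}D^{+}A^{+}\cdots A^{+}$, ending in an $A$-run; consequently the number of $AD$ factors equals the number of $DA$ factors, namely $\ell$. This symmetry is the crux of the count.

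Next I would compute the effect on the number of right valleys of inserting the new maximal entry $m$ after $\w_j$. Because $m$ exceeds every $\w_i$, it is never itself a right valley, and for $1\le j\le k-1$ the single transition $t_j$ is replaced by the pair $AD$ (an ascent $\w_j<m$ followed by a descent $m>\w_{j+1}$), while for $j=k$ the word simply acquires a trailing $A$. A short local comparison of the $DA$ factors created and destroyed then shows that the change $\Delta$ in the number of right valleys is always $0$ or $1$; more precisely, for $1\le j\le k-1$ one finds $\Delta=0$ exactly when $t_j=A$ and $t_{j+1}=D$, or when $t_j=D$ and $t_{j-1}=A$, and for $j=k$ one always has $\Delta=0$.

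Finally I would count the positions with $\Delta=0$. The positions $1\le j\le k-1$ with $t_j=A,\ t_{j+1}=D$ are precisely the left ends of the $AD$ factors, of which there are $\ell$; the positions with $t_j=D,\ t_{j-1}=A$ are precisely their right ends, again $\ell$ in number. These two sets are disjoint, since one demands $t_j=A$ and the other $t_j=D$, and neither contains $j=k$, since $t_k=A$. Adding the single position $j=k$ gives $\ell+\ell+1=2\ell+1$ insertions that preserve the number of right valleys, whence the remaining $k-(2\ell+1)$ raise it by $1$.

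I expect the main obstacle to be the bookkeeping at the boundaries rather than any deep idea: one must treat $j=1$ (where $t_1=A$ is forced and there is no $t_0$) and $j=k$ (where the word is extended rather than split, so that the former virtual $A$ becomes a genuine transition) as separate cases, and verify that the virtual letter $t_k=A$ neither spuriously creates nor destroys factors. Checking the small cases $(1)$, $(1,2)$ and $(1,3,2)$ provides a useful guard against off-by-one errors in these boundary computations.
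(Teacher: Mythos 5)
Your proof is correct and is essentially the paper's argument in more formal dress: the paper observes that peaks and valleys alternate (peak first, valley last), so there are $\ell$ left peaks, and that the neutral insertion slots are exactly those immediately before a peak, immediately after a peak, or at the very end, giving $2\ell+1$; your $AD$/$DA$ factors are precisely these peaks and valleys, your equinumerosity-of-runs observation is the alternation fact, and your three families of positions with $\Delta=0$ coincide with the paper's three families. The only difference is that you verify the local change $\Delta\in\{0,1\}$ explicitly via the ascent--descent word, which the paper asserts without computation.
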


Proof. As observed above, peaks and valleys alternate, a peak occurring first, and a valley occurring last. Thus there are $\l$ peaks. If $m$ is inserted immediately before or after a peak or at the very end, the number of valleys is unchanged, otherwise it is increased by 1.

\begin{prop}
The number $u_{n,k,\l}$ of \cops on $[n]$ with $k$ blocks and $\l$ right valleys in the list of openers satisfies the recurrence
\begin{equation}\label{eq:recurrenceu}
u_{n,k,\l} = k u_{n-1,k,\l} + (2\l+1)u_{n-1,k-1,\l} + (k-2\l)u_{n-1,k-1,\l-1}
\end{equation}
for $n\ge 2,\ \l \ge 0,  \ 2\l+1 \le k \le n$.
\end{prop}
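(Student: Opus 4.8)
The plan is to establish the recurrence by a direct combinatorial argument that tracks the insertion of the largest element $n$ into a cyclically ordered partition of $[n-1]$. Because $n$ is the maximum of $[n]$ and each block is recorded as an increasing list, the element $n$ always occupies the final position of whichever block contains it; consequently $n$ is an opener if and only if it constitutes a singleton block. This observation partitions every cyclically ordered partition of $[n]$ counted by $u_{n,k,\ell}$ into two disjoint families according to whether $n$ lies in a block of size at least two or forms a block by itself, and I would treat these two families separately.

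First I would count the cyclically ordered partitions in which $n$ joins a block already present. Deleting $n$ yields a cyclically ordered partition of $[n-1]$ with the same $k$ blocks and, since $n$ is never an opener in this case, the same list of openers and hence the same number $\ell$ of right valleys. Conversely, any of the $k$ blocks of such a partition of $[n-1]$ may receive $n$, and each choice gives a distinct partition of $[n]$ with the canonical form preserved. This family therefore has size $k\,u_{n-1,k,\ell}$, matching the first term.

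Next I would count the cyclically ordered partitions in which $\{n\}$ is a singleton block. Deleting it gives a cyclically ordered partition of $[n-1]$ with $k-1$ blocks, whose list of openers has length $k-1$ and begins with $1$; reinserting $\{n\}$ amounts to inserting the new opener $n=\max$ into a noninitial position of that list, which is precisely the situation of Lemma \ref{insert}. Applying the lemma to the list of $k-1$ openers: the $2\ell+1$ insertions that preserve the valley count send a source with $\ell$ right valleys to a target with $\ell$ right valleys, contributing $(2\ell+1)\,u_{n-1,k-1,\ell}$; and the $(k-1)-(2(\ell-1)+1)=k-2\ell$ insertions that raise the valley count by one send a source with $\ell-1$ right valleys to a target with $\ell$, contributing $(k-2\ell)\,u_{n-1,k-1,\ell-1}$. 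Summing the two families gives the stated recurrence.

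I expect the only point requiring care to be the coefficient bookkeeping in the singleton case: Lemma \ref{insert} must be invoked for a list of length $k-1$ rather than $k$, and the valley-preserving insertions must be paired with source valley count $\ell$ while the valley-increasing insertions are paired with source valley count $\ell-1$. Apart from this, the disjointness and exhaustiveness of the two families, together with the fact that $n$ serves as an opener exactly when its block is a singleton, follow immediately from the canonical representation, so no genuine obstacle remains.
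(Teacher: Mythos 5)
Your proof is correct and follows essentially the same route as the paper's: classify each cyclically ordered partition of $[n]$ by whether $n$ sits at the end of an existing block or forms a singleton (equivalently, a new opener), and invoke Lemma~\ref{insert} for the singleton case. In fact your version is slightly more careful than the paper's, since you make explicit that the lemma is applied to the length-$(k-1)$ list of openers and verify the coefficient $(k-1)-(2(\ell-1)+1)=k-2\ell$, a bookkeeping step the paper leaves implicit.
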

Proof. Each \cop of size $n$ is obtained  by inserting $n$ into one of size $n-1$, either as the last entry in an existing block or as a new singleton block.
Let $\u_{n,k,\l}$ denote the set of \cops counted by $u_{n,k,\l}$. To obtain an element of $\u_{n,k,\l}$ we can insert $n$ into any existing block of an element of  $\u_{n-1,k,\l}$ (this gives \,$k u_{n-1,k,\l}$ choices\,), or insert $n$ as a singleton block into an element of $\u_{n-1,k-1,\l}$ so that the number of right valleys is unchanged (this gives\,$(2\l+1)u_{n-1,k-1,\l}$ choices\,), or insert $n$ as a singleton block into an element of $\u_{n-1,k-1,\l-1}$ so that the number of right valleys is increased by 1 (this gives \,$(k-2\l)u_{n-1,k-1,\l-1}$ choices\,). The last two counts of choices follow from Lemma~\ref{insert}.

\begin{cor}
For all $n,i,j\geq1$, $b_{n,i,j}$ is the number of cyclically ordered partitions on $[n+1]$ with $i+j$ blocks and $\frac{i-1}{2}$ right valleys (equivalently, $\frac{i-1}{2}$ left peaks) in the list of openers.
\end{cor}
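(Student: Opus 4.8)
The plan is to prove the equivalent identity $b_{n,i,j}=u_{n+1,\,i+j,\,(i-1)/2}$, where $u_{n,k,\l}$ is the count of \cops introduced in the preceding Proposition; the Corollary is then nothing more than the statement that $u_{n+1,\,i+j,\,(i-1)/2}$ counts the \cops in question. A preliminary remark fixes the conventions: \eqref{eq:bnij} forces $b_{n,i,j}=0$ for even $i$ (as noted after Theorem~\ref{mainthm:02}), and for even $i$ the quantity $(i-1)/2$ is not an integer, so the associated valley count is vacuous and $u_{n+1,\,i+j,\,(i-1)/2}=0$ as well. Thus it suffices to treat odd $i=2\l+1$, where $(i-1)/2=\l$ is a genuine right-valley count.

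First I would show that the recurrences \eqref{eq:bnij} and \eqref{eq:recurrenceu} are carried to one another by the dictionary $(n,i,j)\mapsto(n+1,\,k,\,\l)$ with $k=i+j$ and $\l=(i-1)/2$ (so $i=2\l+1$ and $j=k-2\l-1$). Substituting $b_{n,i,j}=u_{n+1,k,\l}$ into the left side of \eqref{eq:bnij} gives $u_{n+2,k,\l}$, and term by term on the right: $(i+j)b_{n,i,j}=k\,u_{n+1,k,\l}$; next $i\,b_{n,i,j-1}=(2\l+1)\,u_{n+1,k-1,\l}$, since lowering $j$ by one drops the block total to $k-1$ while leaving $\l$ fixed; and finally $(j+1)b_{n,i-2,j+1}=(k-2\l)\,u_{n+1,k-1,\l-1}$, because $i-2=2(\l-1)+1$ shifts the valley index to $\l-1$, the block total becomes $(i-2)+(j+1)=k-1$, and $j+1=k-2\l$. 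This is precisely \eqref{eq:recurrenceu} evaluated at $(n+2,k,\l)$, which is legitimate since $n+2\ge 2$.

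It then remains to match the initial data at $n=0$: \eqref{eq:bnij} prescribes $b_{0,i,j}=1$ exactly when $(i,j)=(1,0)$ and $0$ otherwise, while $u_{1,k,\l}$ counts \cops of $[1]$, of which there is a single one, $(1)$, with one block and no right valley; hence $u_{1,1,0}=1$ and $u_{1,k,\l}=0$ for $k\ge 2$. Under the dictionary these agree, so induction on $n$ driven by the matched recurrences yields $b_{n,i,j}=u_{n+1,\,i+j,\,(i-1)/2}$ for all $n,i,j\ge 1$, which is the Corollary. As an independent cross-check one can instead compare the explicit formula \eqref{Explicit:bnij}, rewritten as $b_{n,i,j}=\Stirling{n+1}{i+j}P(i+j-1,(i-1)/2)$ for odd $i$, against a direct count $u_{m,k,\l}=\Stirling{m}{k}P(k-1,\l)$: choose the underlying set partition ($\Stirling{m}{k}$ ways), keep the block containing $1$ first, and observe that since its opener $1$ is the global minimum it never forms a left peak and plays exactly the role of the sentinel $\w_0=0$, so the right valleys of the opener list are equinumerous with the left peaks of the arrangement of the remaining $k-1$ openers, giving $P(k-1,\l)$ choices.

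I expect the only genuine obstacle to be the bookkeeping of the index dictionary at the degenerate boundaries: confirming that the even-$i$ (half-integer valley) cases and the edge values $\l=0$, $k=2\l+1$, and $i-2<1$ all map consistently to the vanishing entries on both sides, so that the induction is not derailed by a spurious mismatch outside the stated ranges of \eqref{eq:bnij} and \eqref{eq:recurrenceu}.
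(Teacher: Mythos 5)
Your proof is correct and follows essentially the same route as the paper, whose entire proof is the one-line observation that comparing recurrences~\eqref{eq:bnij} and~\eqref{eq:recurrenceu} yields $b_{n,i,j}=u_{n+1,i+j,(i-1)/2}$; you simply make explicit the index dictionary, the matching of initial conditions at $n=0$, and the induction that the paper leaves implicit. Your closing cross-check $u_{m,k,\l}=\Stirling{m}{k}P(k-1,\l)$ is also sound, and in fact corrects a small typo in the paper's subsequent Remark, which writes $P(k,\l)$ where $P(k-1,\l)$ is meant.
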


Proof. Comparing recurrence relations~\eqref{eq:bnij} and~\eqref{eq:recurrenceu},
we see that $b_{n,i,j}=u_{n+1,i+j,(i-1)/2}$.

\begin{remark}
A cyclically ordered partition of size $n$ with $k$ blocks and $\ell$ right valleys in the list of openers is obtained by selecting a partition of $[n]$ with $k$ blocks in $\Stirling{n}{k}$ ways, and then arranging the blocks suitably, in $P(k,\ell)$ ways. Hence $u_{n,k,\l}=
\Stirling{n}{k}P(k,\ell)$ and we get a combinatorial proof that $c_{n,2i-1,j}=\Stirling{n+1}{2i-1+j}P(2i-2+j,i-1)$.
\end{remark}

\subsection{The longest alternating subsequences}
\hspace*{\parindent}

Let $\pi=\pi(1)\pi(2)\cdots \pi(n)\in\msn$.
An {\it alternating subsequence} of $\pi$ is a subsequence $\pi({i_1})\cdots \pi({i_k})$ satisfying
$$\pi({i_1})>\pi({i_2})<\pi({i_3})>\cdots \pi({i_k}).$$
The study of the distribution of the length of the
longest alternating subsequences of permutations was recently initiated by
Stanley~\cite{Sta07,Sta08}.

Denote by $\as(\pi)$ the length of the longest alternating subsequence of $\pi$.
Let
$$a_k(n)=\#\{\pi\in\msn:\as(\pi)=k\},$$
and let
$L_n(x)=\sum_{k=1}^na_k(n)x^k$.
Define
\begin{equation*}\label{Txz-def}
L(x,z)=\sum_{n\geq 0}L_n(x)\frac{z^n}{n!}.
\end{equation*}
Stanley~\cite[Theorem 2.3]{Sta08} obtained the following closed-form formula:
\begin{equation*}\label{Stanley}
L(x,z)=(1-x)\frac{1+\rho+2xe^{\rho z}+(1-\rho)e^{2\rho z}}{1+\rho-x^2+(1-\rho-x^2)e^{2\rho z}},
\end{equation*}
where $\rho=\sqrt{1-x^2}$. Moreover, it follows from~\cite[Corollary 8]{Ma1303} that
\begin{equation}\label{Ma}
L(x,z)=-\sqrt{\frac{x-1}{x+1}}\left(\frac{\sqrt{x^2-1}+x\sin(z\sqrt{x^2-1})}{1-x\cos(z\sqrt{x^2-1})}\right).
\end{equation}

As an extension of~\eqref{Grammar:02}, it is natural to consider
the grammar
\begin{equation}\label{Grammar:03}
G=\{w\rightarrow w+wx, x\rightarrow x+xy, y\rightarrow y+x^2\}.
\end{equation}

From~\eqref{Grammar:03}, we have
\begin{equation*}
\begin{split}
D(w)&=w(1+x),\\
D^2(w)&=w(1+3x+xy+x^2);\\
D^3(w)&=w(1+7x+6xy+xy^2+6x^2+3x^2y+2x^3).
\end{split}
\end{equation*}

Define $$D^n(w)=w\sum_{i,j\geq 0}t_{n,i,j}x^{i}y^{j}.$$

Since
\begin{align*}
D^{n+1}(w)&=D\left(w\sum_{i,j\geq 0}t_{n,i,j}x^{i}y^{j}\right)\\
&=\sum_{i,j}(1+i+j)t_{n,i,j}x^{i}y^{j}+\sum_{i,j}t_{n,i,j}x^{i+1}y^j+
\sum_{i,j}it_{n,i,j}x^{i}y^{j+1}+
\sum_{i,j}jt_{n,i,j}x^{i+2}y^{j-1}.
\end{align*}
we get
\begin{equation}\label{eq:tnij}
t_{n+1,i,j}=(1+i+j)t_{n,i,j}+t_{n,i-1,j}+ib_{n,i,j-1}+(j+1)b_{n,i-2,j+1}
\end{equation}
for $i,j\geq 0$ , with the initial conditions $t_{0,i,j}$ to be $1$ if $(i,j)=(0,0)$ or $(i,j)=(1,0)$, and to be $0$ otherwise.
Clearly, $t_{n,0,0}=1$ for $n\geq 0$.

Define
$$T=T(x,p,q)=\sum_{n,i,j\geq0}t_{n,i,j}p^iq^j\frac{x^n}{n!}.$$

We now present the third main result of this paper.
\begin{theorem}\label{mainthm:03}
The generating function $T$ is given by
$$T(x,p,q)=e^x\sqrt{\frac{p-q}{p+q}}\frac{\sqrt{p^2-q^2}+p\sin((e^x-1)\sqrt{p^2-q^2})}{p\cos((e^x-1)\sqrt{p^2-q^2})-q}.$$
Moreover, for all $n\geq 1,i\geq 1$ and $j\geq0$,
\begin{equation}\label{Explicit:tnij}
t_{n,i,j}=\Stirling{n+1}{i+j+1}a_i(i+j).
\end{equation}
\end{theorem}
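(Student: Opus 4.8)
The plan is to mirror the two-part strategy used for Theorems~\ref{mainthm:01} and~\ref{mainthm:02}: first pin down the closed form for $T$ by turning the recurrence into a partial differential equation, and then extract the explicit formula by reorganizing the generating function around the diagonal $k=i+j$ so that Stanley's series $L(x,z)$ appears.

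First I would translate the recurrence \eqref{eq:tnij} into a PDE. Multiplying by $p^iq^jx^n/n!$ and summing, the four terms on the right become, respectively, $(1+p\partial_p+q\partial_q)T$, $pT$, $pqT_p$ and $p^2T_q$, so that
\[
T_x=(1+p)T+p(1+q)T_p+(p^2+q)T_q .
\]
The extra $(1+p)T$ relative to \eqref{eq:Bxpq} comes from the new rule $w\to w+wx$ together with the prefactor $w$ in $D^n(w)$. As in the earlier proofs it is then routine (but lengthy) to check that the candidate $\widetilde T$ satisfies this PDE, and that $\widetilde T(0,p,q)=1$: at $x=0$ the sine vanishes, the cosine is $1$, and $\sqrt{(p-q)/(p+q)}\cdot\sqrt{p^2-q^2}/(p-q)=1$; one may also record the sanity check $\widetilde T(x,0,q)=e^x$. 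Since a formal power series in $x$ is determined by its value at $x=0$ together with the PDE (which expresses the coefficient of $x^{n+1}$ in terms of lower ones), this forces $T=\widetilde T$.

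For the explicit formula I would run the computation in reverse: assuming $t_{n,i,j}=\Stirling{n+1}{i+j+1}a_i(i+j)$ and setting $k=i+j$, I compute
\[
T=\sum_{n,k}\Stirling{n+1}{k+1}\Big(\sum_{i=0}^k a_i(k)\,(p/q)^i\Big)q^k\frac{x^n}{n!}
=\sum_{n,k}\Stirling{n+1}{k+1}L_k(p/q)\,q^k\frac{x^n}{n!}.
\]
Using $\sum_{n\ge0}\Stirling{n+1}{k+1}x^n/n!=e^x(e^x-1)^k/k!$ (the $x$-derivative of the standard EGF $\sum_m\Stirling{m}{k+1}x^m/m!=(e^x-1)^{k+1}/(k+1)!$) together with $L(x,z)=\sum_k L_k(x)z^k/k!$, this collapses to
\[
T(x,p,q)=e^x\,L\!\left(\tfrac{p}{q},\,q(e^x-1)\right).
\]
It then remains to substitute Ma's closed form \eqref{Ma} with $x\mapsto p/q$ and $z\mapsto q(e^x-1)$: replacing $\sqrt{(p/q)^2-1}$ by $\sqrt{p^2-q^2}/q$, noting $q(e^x-1)\sqrt{(p/q)^2-1}=(e^x-1)\sqrt{p^2-q^2}$, rewriting $\sqrt{(p/q-1)/(p/q+1)}=\sqrt{(p-q)/(p+q)}$, and clearing $1/q$ from numerator and denominator turns $e^xL(p/q,q(e^x-1))$ into exactly the claimed closed form for $T$. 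Since the series built from the explicit formula reproduces the already-proven closed form, uniqueness of coefficients yields \eqref{Explicit:tnij}.

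The main obstacle I anticipate is purely the square-root and branch bookkeeping in this final substitution: Ma's formula carries an overall $-\sqrt{(x-1)/(x+1)}$ and a $\sqrt{x^2-1}$ inside, and one must track signs consistently, in particular absorbing the leading minus when flipping $1-(p/q)\cos(\cdot)$ into $(p/q)\cos(\cdot)-1$ while clearing $q$, so that the radicals recombine into the single $\sqrt{p^2-q^2}$ and $\sqrt{(p-q)/(p+q)}$ that appear in $T$. Verifying that $\widetilde T$ solves the PDE is mechanical but tedious, and is the only other place where care is needed.
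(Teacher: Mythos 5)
Your proof is correct and follows essentially the same route as the paper: translate the recurrence \eqref{eq:tnij} into a PDE, verify that the candidate closed form satisfies it together with the initial condition at $x=0$, and then re-sum the claimed explicit formula into $e^xL(p/q,q(e^x-1))$ and match it against the closed form via Ma's formula \eqref{Ma}, concluding by uniqueness of coefficients. In fact your PDE $T_x=(1+p)T+p(1+q)T_p+(p^2+q)T_q$ is the correct transcription of \eqref{eq:tnij} --- the term $t_{n,i-1,j}$ contributes $pT$, which is missing from the paper's displayed equation \eqref{eq:Txpq}, an apparent typo that your version fixes.
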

\begin{proof}
The recurrence~\eqref{eq:tnij} can be written as
\begin{equation}\label{eq:Txpq}
T_x=T+p(1+q)T_p+(p^2+q)T_q.
\end{equation}

It is routine to check that the generating function
$$\widetilde{T}=\widetilde{T}(x,p,q)=e^x\sqrt{\frac{p-q}{p+q}}\frac{\sqrt{p^2-q^2}+p\sin((e^x-1)\sqrt{p^2-q^2})}{p\cos((e^x-1)\sqrt{p^2-q^2})-q}$$
satisfies~\eqref{eq:Txpq}). Also, this generating function gives $\widetilde{T}(0,p,q)=1$ and $\widetilde{T}(x,0,q)=e^x$. Hence, $T=\widetilde{T}$.

Now let us prove that $t_{n,2i-1,j}=\Stirling{n+1}{i+j+1}a_i(i+j)$. Note that
\begin{align*}
\sum_{n,i,j\geq0}t_{n,i,j-i}p^iq^j\frac{x^n}{n!}&=\sum_{n,i,j\geq0}t_{n,i,j-i}p^iq^j\frac{x^n}{n!}=\sum_{n,j\geq0}\Stirling{n+1}{j+1}L_{j}(p)q^j\frac{x^n}{n!}\\
&=e^x\sum_{j\geq0}\frac{(e^x-1)^{j}}{(j)!}L_{j}(p)q^j=e^xL(p,q(e^x-1)),
\end{align*}
Hence,
\begin{equation*}
\sum_{n,i,j\geq0}t_{n,i,j}p^iq^j\frac{x^n}{n!}=e^xL(p/q,q(e^x-1))=T(x,p,q),
\end{equation*}
as required.
\end{proof}

Let $t_n=\sum_{i\geq1,j\geq 0}t_{n,i,j}$.
It follows from~\eqref{Explicit:tnij} that
$t_n=a_n$.
Along the same lines as the proof of Corollary~\ref{openers-descents}, we get the following.

\begin{cor}\label{openers-alt}
For all $n\geq 1,i\geq 1$ and $j\geq0$, $t_{n,i,j}$ is the number of cyclically ordered partitions of $[n+1]$ with $i+j+1$ blocks and the length of the longest alternating subsequence of
the list of openers equals $i$.
\end{cor}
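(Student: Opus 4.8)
The plan is to mirror the proof of Corollary~\ref{openers-descents}: I would first establish an intermediate combinatorial interpretation of the product $\Stirling{n}{k}a_i(k-1)$, and then specialize it using the explicit formula~\eqref{Explicit:tnij} of Theorem~\ref{mainthm:03}. Concretely, the intermediate claim I would prove is that $\Stirling{n}{k}a_i(k-1)$ is the number of \cops of $[n]$ with $k$ blocks whose list of openers has longest alternating subsequence of length $i$.

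To prove this intermediate claim, I would first choose a partition of $[n]$ into $k$ blocks in canonical form—each block increasing, blocks ordered by increasing first entry—which can be done in $\Stirling{n}{k}$ ways; the opener of the first block is then necessarily $1$. Keeping the first block fixed, I would arrange the remaining $k-1$ blocks in one of the $(k-1)!$ possible orders, writing the resulting list of openers as $1\,o_2\cdots o_k$. Since $\as$ is invariant under order-isomorphism, the $(k-1)!$ arrangements realize, bijectively, all $(k-1)!$ order patterns of the subword $o_2\cdots o_k$, i.e.\ all of $\mathfrak{S}_{k-1}$ after reduction to relative order.

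The key observation, and the step I expect to be the crux, is that the leading entry $1$, being the global minimum and occurring first, cannot belong to any alternating subsequence of length $\geq 2$: if it were included it would have to play the role of the initial (peak) entry $\pi(i_1)$ satisfying $\pi(i_1)>\pi(i_2)$, which is impossible for the minimum. Consequently, whenever the subword $o_2\cdots o_k$ is nonempty, $\as(1\,o_2\cdots o_k)=\as(o_2\cdots o_k)$, so the statistic $\as$ of the full opener list depends only on the pattern of $o_2\cdots o_k$. Since $a_i(k-1)$ counts exactly the permutations in $\mathfrak{S}_{k-1}$ with $\as$ equal to $i$, precisely $a_i(k-1)$ of the arrangements yield an opener list with $\as=i$, proving the intermediate claim. (Note that in the relevant range $i\geq 1,\ j\geq 0$ one has $k=i+j+1\geq 2$, so $o_2\cdots o_k$ is indeed nonempty and $\as$ is well defined.)

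Finally, I would substitute $n\mapsto n+1$ and $k\mapsto i+j+1$, so that $k-1=i+j$, and read off that the number of \cops of $[n+1]$ with $i+j+1$ blocks whose opener list has $\as=i$ equals $\Stirling{n+1}{i+j+1}a_i(i+j)$, which is exactly $t_{n,i,j}$ by~\eqref{Explicit:tnij}. This completes the argument. The only genuinely new ingredient beyond the template of Corollary~\ref{openers-descents} is the reduction in the third paragraph showing that the forced leading minimum is inert for the alternating-subsequence statistic; everything else is a direct transcription.
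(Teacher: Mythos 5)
Your proof is correct and follows essentially the same route as the paper, which disposes of this corollary by saying it goes ``along the same lines'' as Corollary~\ref{openers-descents}: choose a partition in canonical form in $\Stirling{n+1}{i+j+1}$ ways, rearrange the $i+j$ non-initial blocks, and match the resulting opener patterns against $a_i(i+j)$, then invoke~\eqref{Explicit:tnij}. The only difference is that you make explicit the step the paper leaves implicit --- that the forced leading entry $1$ cannot start an alternating subsequence of length $\geq 2$, so $\as$ of the opener list equals $\as$ of its tail --- which is a welcome clarification rather than a deviation.
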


\section{A product of the Stirling numbers of the second kind and binomial coefficients}\label{sec:03}
\hspace*{\parindent}

Consider the grammar
\begin{equation}\label{Grammar:04}
G=\{x\rightarrow x+x^2+xy, y\rightarrow y+y^2+xy\}.
\end{equation}

From~\eqref{Grammar:04}, we have
\begin{equation*}
\begin{split}
D(x)&=x+xy+x^2,\\
D^2(x)&=x+3xy+2xy^2+3x^2+4x^2y+2x^3,\\
D^3(x)&=x+7xy+12xy^2+6xy^3+7x^2+24x^2y+18x^2y^2+12x^3+18x^3y+6x^4.
\end{split}
\end{equation*}

For $n\geq 0$, we define $$D^n(x)=\sum_{i\geq1,j\geq 0}c_{n,i,j}x^iy^j.$$

Since
\begin{align*}
D^{n+1}(x)&=D\left(\sum_{i,j}c_{n,i,j}x^iy^j\right)\\
&=\sum_{i,j}(i+j)c_{n,i,j}x^{i}y^{j}+\sum_{i,j}(i+j)c_{n,i,j}x^{i}y^{j+1}+\sum_{i,j}(i+j)c_{n,i,j}x^{i+1}y^{j},
\end{align*}
we get
\begin{equation}\label{eq:cnij}
c_{n+1,i,j}=(i+j)c_{n,i,j}+(i+j-1)c_{n,i,j-1}+(i+j-1)c_{n,i-1,j}
\end{equation}
for $i\geq 1$ and $j\geq 0$, with the initial conditions $c_{0,i,j}$ to be $1$ if $(i,j)=(1,0)$, and to be $0$ otherwise.
Clearly, $c_{n,1,0}=1$ for $n\geq 1$.

Define
$$C=C(x,p,q)=\sum_{n,i,j\geq0}c_{n,i,j}\frac{x^{n}}{n!}p^iq^j.$$

We now present the fourth main result of this paper.
\begin{theorem}\label{mainthm:03}
The generating function $C$ is given by
$$C(x,p,q)=\frac{pe^x}{1-(p+q)(e^x-1)}.$$
Moreover, for all $n,i,j\geq1$,
\begin{equation}\label{Explicit:cnij}
c_{n,i,j}=(i+j-1)!\Stirling{n+1}{i+j}\binom{i+j-1}{j}.
\end{equation}
\end{theorem}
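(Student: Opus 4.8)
The plan is to follow the template used in the proofs of Theorems~\ref{mainthm:01} and~\ref{mainthm:02}: translate the recurrence~\eqref{eq:cnij} into a first-order linear partial differential equation for $C$, verify that the proposed closed form solves it together with the correct initial data, and then separately compute the exponential generating function of the right-hand side of~\eqref{Explicit:cnij} to confirm that it reproduces $C$.

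First I would derive the PDE. Multiplying~\eqref{eq:cnij} by $\frac{x^n}{n!}p^iq^j$ and summing over $n,i,j$, the term $(i+j)c_{n,i,j}$ contributes $pC_p+qC_q$; the shift $j\mapsto j+1$ applied to $(i+j-1)c_{n,i,j-1}$ extracts a factor $q$ and again reproduces $pC_p+qC_q$, while the shift $i\mapsto i+1$ applied to $(i+j-1)c_{n,i-1,j}$ extracts a factor $p$ and likewise reproduces $pC_p+qC_q$. Collecting terms gives
\begin{equation*}
C_x=(1+p+q)(pC_p+qC_q).
\end{equation*}
Next I would check that $\widetilde{C}(x,p,q)=\frac{pe^x}{1-(p+q)(e^x-1)}$ satisfies this equation. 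Writing $M=1-(p+q)(e^x-1)$, a short calculation yields $\widetilde{C}_x=(1+p+q)pe^x/M^2$ and $p\widetilde{C}_p+q\widetilde{C}_q=pe^x/M^2$, so the identity holds. Since $\widetilde{C}(0,p,q)=p$ agrees with $C(0,p,q)=p$ (as $c_{0,i,j}=1$ exactly when $(i,j)=(1,0)$), and $\widetilde{C}(x,0,q)=0$ agrees with $c_{n,0,j}=0$, the power-series coefficients of $\widetilde{C}$ obey the same recurrence and initial conditions as $c_{n,i,j}$; hence $C=\widetilde{C}$.

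Finally, to establish~\eqref{Explicit:cnij} I would group the claimed values along the diagonal $k:=i+j$, since the formula depends on $(i,j)$ only through $i+j$ and the single binomial $\binom{i+j-1}{j}$. By the binomial theorem,
\begin{equation*}
\sum_{\substack{i+j=k\\ i\ge1}}\binom{k-1}{j}p^iq^j=p(p+q)^{k-1},
\end{equation*}
and differentiating the classical identity $\sum_{n}\Stirling{n}{k}\frac{x^n}{n!}=\frac{(e^x-1)^k}{k!}$ gives $\sum_{n\ge0}\Stirling{n+1}{k}\frac{x^n}{n!}=\frac{e^x(e^x-1)^{k-1}}{(k-1)!}$. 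Substituting the formula~\eqref{Explicit:cnij} into $\sum_{n,i,j}c_{n,i,j}\frac{x^n}{n!}p^iq^j$, the $(k-1)!$ in the numerator cancels the $(k-1)!$ in this denominator, and the sum over $k\ge1$ becomes a geometric series in $(e^x-1)(p+q)$ that sums to $\widetilde{C}$. Matching coefficients then yields the formula.

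Because $\widetilde{C}$ is rational rather than transcendental, the PDE verification here is genuinely short, unlike the analogous (hyperbolic and trigonometric) checks in the earlier theorems. The one step that requires real care is the diagonal reorganization in the last paragraph: one must recognize the dependence on $i+j$ and the lone binomial $\binom{i+j-1}{j}$, which is precisely what makes the inner $i,j$-sum collapse to $p(p+q)^{k-1}$ and lets the resulting geometric series close up to give exactly the rational generating function.
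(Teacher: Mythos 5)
Your proposal is correct and follows essentially the same route as the paper: the same PDE $C_x=(1+p+q)(pC_p+qC_q)$ with the same verification of $\widetilde{C}$ and its boundary values, and the same confirmation of~\eqref{Explicit:cnij} by summing along the diagonals $i+j=k$, where the binomial theorem, the Stirling-number exponential generating function, and a geometric series reproduce the rational closed form. The paper phrases this last step via the substitution $p=vw$, $q=w$, but that is only a cosmetic difference from your direct computation in $p$ and $q$.
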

\begin{proof}
The recurrence~\eqref{eq:cnij} can be written as
\begin{equation}\label{eq:Cxpq}
C_x=p(1+p+q)C_p+q(1+p+q)C_q.
\end{equation}
It is routine to check that the generating function
$$\widetilde{C}(x,p,q)=\frac{pe^x}{1-(p+q)(e^x-1)}$$
satisfies (\ref{eq:Cxpq}). Also, this generating function gives $\widetilde{C}(0,p,q)=p$ and $\widetilde{C}(x,0,q)=0$. Hence, $C=\widetilde{C}$.
Now let us prove that $c_{n,i,j}=(i+j-1)!\Stirling{n+1}{i+j}\binom{i+j-1}{j}$. Note that
\begin{align*}
\frac{d}{dx}\sum_{n,i,k\geq0}c_{n,i,k+1-i}\frac{x^{n+1}}{(n+1)!}v^{i+1}w^k
&=v\frac{d}{dx}\sum_{k\geq0}\left(\sum_{n\geq k+1}\Stirling{n+1}{k+1}\frac{x^{n+1}}{(n+1)!}\sum_{i=0}^k\binom{k}{i}v^i\right)k!w^k\\
&=v\frac{d}{dx}\sum_{k\geq0}\frac{(e^x-1)^{k+1}}{k+1}(1+v)^kw^k\\
&=\frac{ve^x}{1-w(1+v)(e^x-1)},
\end{align*}
which implies
\begin{align*}
C(x,vw,w)=\frac{vwe^x}{1-w(1+v)(e^x-1)},
\end{align*}
as required.
\end{proof}

Let $c_n=\sum_{i\geq1,j\geq 0}c_{n,i,j}$.
It follows from~\eqref{Explicit:cnij} that
$c_n=\sum_{k= 0}^n2^kk!\Stirling{n+1}{k+1}$.

\section{Descent statistic of hyperoctahedral group and perfect matchings}\label{sec:04}
\hspace*{\parindent}

Let us first recall some definitions. The {\it Whitney numbers of the second kind} $W_m(n,k)$
can be explicitly defined by
$$W_m(n,k)=\sum_{i=k}^n\binom{n}{i}m^{i-k}\Stirling{i}{k}.$$
They satisfy the recurrence
$$W_m(n,k)=W_m(n-1,k-1)+(1+mk)W_m(n-1,k),$$
with initial conditions $W_m(0,0)=1$ and $W_m(n,0)=0$ for $n\geq 1$ (see~\cite{Cheon12}).
In particular, $W_2(n,k)$ also known as the type $B$ analogue of Stirling numbers of the second kind (see~\cite[A039755]{Sloane}).

The {\it hyperoctahedral group} $B_n$ is the group of signed permutations of the set $\pm[n]$ such that $\pi(-i)=-\pi(i)$ for all $i$, where $\pm[n]=\{\pm1,\pm2,\ldots,\pm n\}$. For each $\pi\in B_n$,
we define
\begin{equation*}
\des_B(\pi):=\#\{i\in\{0,1,2,\ldots,n-1\}|\pi(i)>\pi({i+1})\},
\end{equation*}
where $\pi(0)=0$.

Let
$${B}_n(x)=\sum_{\pi\in B_n}x^{\des_B(\pi)}=\sum_{k=0}^nB(n,k)x^{k}.$$
The polynomial $B_n(x)$ is called an {\it Eulerian polynomial of type $B$}, while $B(n,k)$ is called an {\it Eulerian number of type $B$} (see~\cite[A060187]{Sloane}).
The numbers $B(n,k)$ satisfy
the recurrence relation
\begin{equation*}
B(n+1,k)=(2k+1)B(n,k)+(2n-2k+3)B(n,k-1)
\end{equation*}
for $n,k\geq 0$, where $B(0,0)=1$ and $B(0,k)=0$ for $k\geq 1$.
The first few of the polynomials ${B}_n(x)$ are
\begin{align*}
  B_1(x)& =1+x, \\
  B_2(x)& =1+6x+x^2, \\
  B_3(x)& =1+23x+23x^2+x^3.
\end{align*}

Recall that a {\it perfect matching} of $[2n]$ is a partition of $[2n]$ into $n$ blocks of size $2$.
Denote by $N({n,k})$ the number of perfect matchings of $[2n]$ with the restriction that only $k$ matching pairs
have odd smaller entries (see~\cite[{A185411}]{Sloane}). It is easy to verify that
\begin{equation}\label{recurrence-11}
N({n+1,k})=2kN({n,k})+(2n-2k+3)N({n,k-1}).
\end{equation}
Let
$N_n(x)=\sum_{k=1}^nN({n,k})x^k$.
It follows from~(\ref{recurrence-11}) that
$$N_{n+1}(x)=(2n+1)xN_n(x)+2x(1-x)\frac{d}{dx}N_n(x),$$
with initial values $N_0(x)=1$, $N_1(x)=x$ and $N_2(x)=2x+x^2$.
Let
$$N(x,t)=\sum_{n\geq 0}N_n(x)\frac{t^n}{n!}.$$
It is well known that
\begin{equation}\label{eqNN1}
N(x,t)=e^{xt}\sqrt{\frac{1-x}{e^{2xt}-xe^{2t}}}.
\end{equation}

There is a grammatical characterization of the numbers $N(n,k)$ and $B(n,k)$:
if
\begin{equation}\label{Grammar:TypeB}
G=\{x\rightarrow xy^2, y\rightarrow x^2y\},
\end{equation}
then
\begin{equation*}
D^n(x)=\sum_{k=0}^nN(n,k)x^{2n-2k+1}y^{2k},~
D^n(xy)=\sum_{k=0}^nB(n,k)x^{2n-2k+1}y^{2k+1},\\
\end{equation*}
which was obtained in~\cite[Theorem 10]{Ma1302}.

As an extension of~\eqref{Grammar:TypeB}, it is natural to consider the grammar
\begin{equation}\label{Grammar:05}
G=\{x\rightarrow x+xy^2, y\rightarrow y+x^2y\}.
\end{equation}

From~\eqref{Grammar:05}, we have
\begin{equation*}
\begin{split}
D(x)&=x+xy^2,\\
D^2(x)&=x+4xy^2+xy^4+2x^3y^2;\\
D(xy)&=2xy+xy^3+x^3y,\\
D^2(xy)&=4xy+6xy^3+xy^5+6x^3y+6x^3y^3+x^5y.
\end{split}
\end{equation*}

Define $$D^n(x)=\sum_{i,j\geq 0}e_{n,i,j}x^{2i+1}y^{2j},$$
$$D^n(xy)=\sum_{i,j\geq 0}f_{n,i,j}x^{2i+1}y^{2j+1},$$

Since
\begin{align*}
D^{n+1}(x)&=D\left(\sum_{i,j\geq 0}e_{n,i,j}x^{2i+1}y^{2j}\right)\\
&=\sum_{i,j}(2i+2j+1)e_{n,i,j}x^{2i+1}y^{2j}+\sum_{i,j}(2i+1)e_{n,i,j}x^{2i+1}y^{2j+2}+
\sum_{i,j}2je_{n,i,j}x^{2i+3}y^{2j},
\end{align*}
we get
\begin{equation}\label{eq:enij}
e_{n+1,i,j}=(2i+2j+1)e_{n,i,j}+(2i+1)e_{n,i,j-1}+2je_{n,i-1,j}
\end{equation}
for $i,j\geq 0$, with the initial conditions $e_{0,0,0}=1$ and $e_{0,i,j}=0$ if $(i,j)\neq (0,0)$.

Similarly,
since
\begin{align*}
D^{n+1}(xy)&=D\left(\sum_{i,j\geq 0}f_{n,i,j}x^{2i+1}y^{2j+1}\right)\\
&=\sum_{i,j}(2i+2j+2)f_{n,i,j}x^{2i+1}y^{2j+1}+\sum_{i,j}(2i+1)f_{n,i,j}x^{2i+1}y^{2j+3}+\\
&\sum_{i,j}(2j+1)f_{n,i,j}x^{2i+3}y^{2j+1},
\end{align*}
we get
\begin{equation}\label{eq:fnij}
f_{n+1,i,j}=(2i+2j+2)f_{n,i,j}+(2i+1)f_{n,i,j-1}+(2j+1)f_{n,i-1,j}
\end{equation}
for $i,j\geq 0$, with the initial conditions $f_{0,0,0}=1$ and $f_{0,i,j}=0$ if $(i,j)\neq (0,0)$.

Define
$$E=E(x,p,q)=\sum_{n,i,j\geq0}e_{n,i,j}p^iq^j\frac{x^n}{n!},$$
$$F=F(x,p,q)=\sum_{n,i,j\geq0}f_{n,i,j}p^iq^j\frac{x^n}{n!}.$$

We now present the fifth main result of this paper.
\begin{theorem}\label{mainthm:05}
The generating functions $E$ and $F$ are respectively given by
$$E(x,p,q)=e^{x+q(e^{2x}-1)/2}\sqrt{\frac{p-q}{pe^{q(e^{2x}-1)}-qe^{p(e^{2x}-1)}}}$$
and
$$F(x,p,q)=\frac{(p-q)e^{(p-q)(e^{2x}-1)/2+2x}}{p-qe^{(p-q)(e^{2x}-1)}}.$$
Moreover, for all $n,i,j\geq1$,
$$e_{n,i,j}=W_2(n,i+j)N(i+j,j),$$
$$f_{n,i,j}=2^{n-i-j}\Stirling{n+1}{i+j+1}B(i+j,j).$$
\end{theorem}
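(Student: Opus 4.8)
The plan is to mirror the two-step strategy of the earlier proofs: first pin down $E$ and $F$ by turning the recurrences into linear first-order PDEs and checking the proposed closed forms, then verify the explicit formulas by computing the generating functions of their right-hand sides via a diagonal substitution. Write $\widetilde E$ and $\widetilde F$ for the two proposed closed forms in the statement.

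First I would convert \eqref{eq:enij} and \eqref{eq:fnij} into PDEs. Multiplying \eqref{eq:enij} by $p^iq^jx^n/n!$ and summing (using that $p\partial_p$ and $q\partial_q$ extract the factors $i$ and $j$, while the index shifts $j\to j-1$ and $i\to i-1$ contribute factors $q$ and $p$) gives
$$E_x=(1+q)E+2p(1+q)E_p+2q(1+p)E_q,$$
and likewise \eqref{eq:fnij} yields
$$F_x=(2+p+q)F+2p(1+q)F_p+2q(1+p)F_q.$$
Since each recurrence determines the $x$-coefficients one degree at a time, a formal power series solution is uniquely fixed by its value at $x=0$, namely $E(0,p,q)=F(0,p,q)=1$. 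Thus it suffices to check --- routine if tedious differentiation --- that $\widetilde E$ and $\widetilde F$ satisfy the respective PDEs and reduce to $1$ at $x=0$; one may also record that at $p=0$ they reduce to the consistent values $e^{x+q(e^{2x}-1)/2}$ and $e^{2x+q(e^{2x}-1)/2}$. This gives $E=\widetilde E$ and $F=\widetilde F$.

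Next I would verify the explicit formula for $F$. Grouping the claimed formula by $k=i+j$ and using $\sum_{j=0}^kB(k,j)p^{k-j}q^j=p^kB_k(q/p)$, together with the Stirling identity $\sum_{n\ge0}\Stirling{n+1}{k+1}\frac{(2x)^n}{n!}=e^{2x}\frac{(e^{2x}-1)^k}{k!}$, collapses the triple sum to
$$F=e^{2x}\sum_{k\ge0}B_k(q/p)\frac{u^k}{k!},\qquad u=\tfrac12 p(e^{2x}-1).$$
Plugging in the (classical) type $B$ Eulerian EGF $\sum_{k\ge0}B_k(s)\frac{u^k}{k!}=\frac{(1-s)e^{(1-s)u}}{1-se^{2(1-s)u}}$ with $s=q/p$ and simplifying recovers $\widetilde F$ exactly. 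The computation for $E$ is parallel but rests on two other series. From the definition $W_2(n,k)=\sum_i\binom{n}{i}2^{i-k}\Stirling{i}{k}$ a binomial convolution with the all-ones sequence gives $\sum_{n\ge0}W_2(n,k)\frac{x^n}{n!}=e^x\,2^{-k}\frac{(e^{2x}-1)^k}{k!}$. Grouping $e_{n,i,j}=W_2(n,i+j)N(i+j,j)$ by $k=i+j$ and using $\sum_{j=0}^kN(k,j)p^{k-j}q^j=p^kN_k(q/p)$ then yields
$$E=e^x\sum_{k\ge0}N_k(q/p)\frac{u^k}{k!}=e^x\,N\!\left(\tfrac{q}{p},\,\tfrac12 p(e^{2x}-1)\right),$$
with the same $u$; substituting the closed form \eqref{eqNN1} for $N(x,t)$ with $s=q/p$ and clearing the powers of $p$ inside the radical produces $\widetilde E$.

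The main obstacle is bookkeeping rather than conceptual. On the uniqueness side, the PDE check forces one to differentiate the square-root/exponential closed forms and track the cancellations. On the explicit-formula side, the delicate point is the algebraic simplification after the substitution $s=q/p$, $u=\tfrac12 p(e^{2x}-1)$ --- in particular pulling the powers of $p$ through the radical in the $E$ computation so that the factor $p-q$ and the denominator $pe^{q(e^{2x}-1)}-qe^{p(e^{2x}-1)}$ emerge cleanly. I would also need to record the two auxiliary EGFs, for $W_2(n,k)$ and for the type $B$ Eulerian polynomials, since only $N(x,t)$ is supplied in the text.
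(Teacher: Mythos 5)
Your proposal is correct and follows essentially the same route as the paper: convert the recurrences \eqref{eq:enij} and \eqref{eq:fnij} into the first-order PDEs \eqref{eq:Expq} and \eqref{eq:Fxpq}, identify $E$ and $F$ with the closed forms by uniqueness of the power-series solution, and then verify the explicit formulas by grouping along $k=i+j$ (your $s=q/p$ homogenization is the same device as the paper's substitution $q\to pq$) and matching against the known exponential generating functions. The only difference is presentational: you explicitly record the auxiliary EGFs for $W_2(n,k)$ and for the type $B$ Eulerian polynomials, which the paper invokes implicitly via \eqref{eqNN1} and the OEIS reference.
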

\begin{proof}
By rewriting \eqref{eq:enij} and \eqref{eq:fnij} in terms of generating functions, we obtain
\begin{equation}\label{eq:Expq}
E_x=2p(1+q)E_p+2q(1+p)E_q+(1+q)E,
\end{equation}
and
\begin{equation}\label{eq:Fxpq}
F_x=2p(1+q)F_p+2q(1+p)F_q+(2+p+q)F.
\end{equation}
It is routine to check that the generating functions
$$\widetilde{E}=\widetilde{E}(x,p,q)=e^{x+q(e^{2x}-1)/2}\sqrt{\frac{p-q}{pe^{q(e^{2x}-1)}-qe^{p(e^{2x}-1)}}}$$
and
$$\widetilde{F}=\widetilde{F}(x,p,q)=\frac{(p-q)e^{(p-q)(e^{2x}-1)/2+2x}}{p-qe^{(p-q)(e^{2x}-1)}}$$
satisfies (\ref{eq:Expq}) and (\ref{eq:Fxpq}), respectively. Also, this generating functions give $\widetilde{F}(0,p,q)=\widetilde{E}(0,p,q)=1$, $\widetilde{E}(x,0,q)=\widetilde{F}(x,0,q)=e^{q(2^{2x}-1)/2+2x}$, $\widetilde{E}(x,p,0)=e^x$ and $\widetilde{F}(x,p,0)=e^{p(2^{2x}-1)/2+2x}$. Hence, $E=\widetilde{E}$ and $F=\widetilde{F}$.

Now let us prove that the generating function for the sequences $e_{n,i-j,j}=W_2(n,i)N(i,j)$ and $f_{n,i-j,j}=2^{n-i}\Stirling{n+1}{i+1}B(i,j)$ are given by $E(x,p,pq)$ and $F(x,p,pq)$. By (\ref{eqNN1}) and \cite[A039755]{Sloane}, we have
\begin{align*}
\sum_{n,i,k\geq0}e_{n,i-j,j}\frac{x^n}{n!}p^iq^j
&=\sum_{i\geq0}\left(\sum_{n\geq i}W_2(n,i)\frac{x^n}{n!}\right)N_i(q)\\
&=e^{x}\sum_{i\geq0}\frac{p^i(e^x-1)^{i}}{2^ii!}N_i(q)\\
&=E(x,p,pq),
\end{align*}
and
\begin{align*}
\frac{d}{dx}\sum_{n,i,k\geq0}f_{n,i-j,j}\frac{x^{n+1}}{(n+1)!}p^iq^j
&=\frac{d}{dx}\sum_{i\geq0}\left(\sum_{n\geq i}\Stirling{n+1}{i+1}\frac{(2x)^{n+1}p^i}{2^{i+1}(n+1)!}\right)B_i(q)\\
&=e^{2x}\sum_{i\geq0}\frac{p^i(e^x-1)^{i}}{2^ii!}B_i(q)\\
&=F(x,p,pq),
\end{align*}
which completes the proof.
\end{proof}

\section{Concluding remarks}\label{sec:05}
\hspace*{\parindent}

In this paper, we explore some combinatorial structures associated with~\eqref{Lotka-Volterra-model-2}. In fact, there are many other extension of~\eqref{Lotka-Volterra-model}. For example, many authors investigated the following generalized Lotka-Volterra system (see~\cite{Ollagnier01}):
\begin{equation*}\label{L-V-2}
\frac{dx}{dt}=x(Cy+z),
\frac{dy}{dt}=y(Az+x),
\frac{dz}{dt}=z(Bx+y).
\end{equation*}

Consider the grammar
\begin{equation*}\label{Grammar:07}
G=\{x\rightarrow x(y+z), y\rightarrow y(z+x), z\rightarrow z(x+y)\}.
\end{equation*}

Define $$D^n(x)=\sum_{i\geq1,j\geq 0}g_{n,i,j}x^iy^jz^{n+1-i-j}.$$

By induction, one can easily verify the the following: for all $n\geq 1,i\geq1$ and $j\geq0$, we have
\begin{equation*}
g_{n,i,0}=\Eulerian{n}{i},
g_{n,i,n+1-i}=\Eulerian{n}{i},
g_{n,1,j}=\Eulerian{n+1}{j+1}.
\end{equation*}



\begin{thebibliography}{22}
\bibitem{Aguiar04}
M. Aguiar, N. Bergeron, K. Nyman, The peak algebra and the descent algebras of types B and D,
\newblock{\em Trans. Amer. Math. Soc.} 356 (2004), no. 7, 2781--2824.


\bibitem{Allen07}
L.J.S. Allen, An Introduction to Mathematical Biology, Prentice Hall, New Jersey, 2007.


\bibitem{Chauvet02}
E. Chauvet, J.E. Paullet, J.P. Previte, Z. Walls,
A Lotka-Volterra Three-species Food Chain, \newblock {\em Math. Mag.} 75 (2002) 243--255.

\bibitem{Chen93}
W.Y.C. Chen, Context-free grammars, differential operators and formal
power series, \newblock {\em Theoret. Comput. Sci.} 117 (1993), 113--129.

\bibitem{Cheon12}
G.-S. Cheon, J.-H. Jung, The $r$-Whitney numbers of Dowling lattices, \newblock {\em Discrete Math.} 312 (2012), 2337--2348.

\bibitem{Dilks09}
K. Dilks, T.K. Petersen, J.R. Stembridge, Affine descents and the Steinberg torus, \newblock {\em Adv. in
Appl. Math.} 42 (2009) 423--444.


\bibitem{Dumont96}
D. Dumont, Grammaires de William Chen et d\'erivations dans les arbres et
arborescences, \newblock {\em S\'em. Lothar. Combin.} 37, Art. B37a (1996) 1--21.

\bibitem{Evans99}
C.M. Evans, G.L. Findley, Analytic solutions to a family of Lotka-Volterra related differential equations, \newblock {\em J. Math. Chem.} 25 (1999) 181--189.

\bibitem{Francon79} J. Fran\c{c}on and G. Viennot, Permutations selon leurs pics, creux, doubles mont\'{e}es et double descentes, nombres d'Euler et nombres de Genocchi, \newblock{\em Discrete Math.} 28 (1979) 21--35.

\bibitem{Franssens07}
G. R. Franssens, Functions with derivatives given by polynomials in the function
itself or a related function, \newblock {\em Anal. Math.} 33 (2007) 17--36.

\bibitem{Hoffman99}
M. E. Hoffman, Derivative polynomials, Euler polynomials, and associated integer
sequences, \newblock {\em Electron. J. Combin.} 6 (1999) \#R21.

\bibitem{Ma121}
S.-M. Ma, Derivative polynomials and enumeration of permutations by number of interior and left peaks,
\newblock {\em Discrete Math.} 312 (2012) 405--412.

\bibitem{Ma13}
S.-M. Ma, A family of two-variable derivative polynomials for tangent and secant,
\newblock {\em Electron. J. Combin.} 20(1) (2013) \#P11.

\bibitem{Ma1302}
S.-M. Ma, Some combinatorial arrays generated by context-free grammars, \newblock{\em European J. Combin.} 34 (2013) 1081--1091.

\bibitem{Ma1303}
S.-M. Ma, Enumeration of permutations by number of alternating runs, \newblock{\em Discrete Math.} 313 (2013) 1816--1822.

\bibitem{MiMura78}
M. MiMura, T. Nishida, On a Certain Semilinear Parabolic System Related to the Lotka-Volterra Ecological Model,
\newblock{\em Publ. RIMS, Kyoto Univ.} 14 (1978) 269--282.


\bibitem{Ollagnier01}
J.M. Ollagnier, Liouvillian integration of the Lotka-Volterra system, \newblock{\em Qual. Theory Dyn. Syst.} 2 (2001) 307--358.


\bibitem{Shih97}
S.-D. Shih, The period of a Lotka-Volterra system, \newblock {\em Taiwanese J. Math.} 1(4) (1997) 451--470.

\bibitem{Singer92}
M. F. Singer, Liouvillian first integrals of differential equations, \newblock {\em Trans. Amer.
Math. Soc.} 333 (1992) 673--688.

\bibitem{Sloane}
N.J.A. Sloane, The On-Line Encyclopedia of Integer Sequences,
published electronically at
http://oeis.org, 2010.

\bibitem{Sta07}
R.P. Stanley,
Increasing and decreasing subsequences and their variants, Proc.
Internat. Cong. Math (Madrid 2006), American Mathematical Society, 549--579,
2007.


\bibitem{Sta08}
R.P. Stanley, Longest alternating subsequences of permutations,
\newblock {\em Michigan Math. J.} 57 (2008) 675--687.
\end{thebibliography}
\end{document}